\newtheorem*{thm}{Theorem}
\newtheorem*{proposition}{Proposition}
\newtheorem{corollary}{Corollary}
\newtheorem{lemma}{Lemma}
\theoremstyle{definition}
\theoremstyle{remark}
\DeclareMathOperator{\Imn}{Im}
\DeclareMathOperator{\Rem}{Re}
\begin{document}

\title[]{On the Stability of Fourier Phase Retrieval}
\keywords{Phase Retrieval, Stability, Fourier Transform}
\subjclass[2010]{42A63, 45Q05} 
\thanks{S.S. is supported by the NSF (DMS-2123224) and the Alfred P. Sloan Foundation.}

\author[]{Stefan Steinerberger}
\address{Department of Mathematics, University of Washington, Seattle, WA 98195, USA}
\email{steinerb@uw.edu}

\begin{abstract} Phase retrieval is concerned with recovering a function $f$ from the absolute value of its Fourier transform $|\widehat{f}|$. We study the stability properties of this problem in Lebesgue spaces. Our main results shows that
$$  \| f-g\|_{L^2(\mathbb{R}^n)}  \leq   2\cdot \| |\widehat{f}| - |\widehat{g}| \|_{L^2(\mathbb{R}^n)} +   h_f\left( \|f-g\|^{}_{L^p(\mathbb{R}^n)}\right) + J(\widehat{f}, \widehat{g}),$$
where $1 \leq p < 2$, $h_f$ is an explicit nonlinear function depending on the smoothness of $f$ and $J$ is an explicit term capturing the invariance under translations. A noteworthy aspect is that the stability is phrased in terms of $L^p$ for $1 \leq p < 2$ while, usually, $L^p$ cannot be used to control $L^2$, the stability estimate has the flavor of an inverse H\"older inequality. It seems conceivable that the estimate is optimal up to constants.
\end{abstract}

\maketitle

\section{Introduction and Results}
\subsection{Introduction} Phase retrieval refers to a broad class of problems where one is given incomplete information about an object (often the size of the coefficients with respect to some basis expansion but not their phase) and then tries to reconstruct the object. In the case of the Fourier transform, the challenge is to recover a function $f$ from knowing only the modulus of its Fourier transform $|\widehat{f}|$. The problem itself is classical and first arose, implicitly, a century ago in the setting of x-ray crystallography. It has since appeared in a variety of different fields \cite{dainy, ha, jam, miao, mill, sh, walther}. 
There is a vast literature, one possible starting point is \cite{akut, akut2, bu, bu2, bruck, cr, cr2, fann, fienup, gabor, green, grohs, hof, jam1, jam2, kil, rose, sanz}.  The question has recently been studied in more abstract settings (say, recovering Hilbert space elements for which one knows the size of certain inner products), see \cite{al, al2, alex, bal, bal2, bal3, ban, bar, cahill, eldar, grohs1, grohs2, mallat} and references therein. We cannot possibly hope to summarize the existing literature but we emphasize two very recent and excellent surveys, one by Grohs, Koppensteiner \& Rathmair \cite{grohs} about theoretical aspects and one by Fannjiang \& Strohmer \cite{fann} about the numerical side of things.\\

We study the stability problem and begin by recalling the translation symmetry: the functions $f(x)$ and its shift $f(x+\varepsilon)$ cannot be distinguished from looking at the modulus of their Fourier transform and this has to play a role in all the stability results. We also recall that  if $f, g \in L^2(\mathbb{R})$ are both compactly supported, then there exists a convenient characterization of all pairs $(f,g) \in L^2(\mathbb{R}) \times L^2(\mathbb{R})$ for which $|\widehat{f}| = |\widehat{g}|$ in terms of complex analysis (since both $\widehat{f}$ and $\widehat{g}$ are entire) \cite{akut, akut2, hof, walther}. As a consequence of this characterization, we have a basic uniqueness result, see for example \cite[Theorem 4.9]{grohs} or \cite[Proposition 3.3]{kil} 

\begin{proposition}[see e.g. \cite{akut, akut2, grohs, hof, kil, walther}]   If $f \in L^2(\mathbb{R})$ is compactly supported and satisfies 
$$\widehat{f}(-\xi) = \overline{\widehat{f}(\xi)},$$ then it is uniquely determined by $|\widehat{f}|$. \end{proposition}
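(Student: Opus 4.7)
The plan is to combine the Paley--Wiener theorem with Schwarz reflection in order to encode the data $|\widehat{f}|$ as a single entire function, and then to invoke Hadamard factorization to recover $f$. Because $f$ is compactly supported, say in $[-R,R]$, Paley--Wiener extends $\widehat{f}$ to an entire function of exponential type at most $2\pi R$. The hypothesis $\widehat{f}(-\xi) = \overline{\widehat{f}(\xi)}$ promotes by analytic continuation to the identity $\widehat{f}(-\overline{z}) = \overline{\widehat{f}(z)}$ on all of $\mathbb{C}$. In particular, on the real axis,
\[
|\widehat{f}(\xi)|^2 \;=\; \widehat{f}(\xi)\,\overline{\widehat{f}(\xi)} \;=\; \widehat{f}(\xi)\,\widehat{f}(-\xi),
\]
and the right-hand side extends to the entire function $F(z) := \widehat{f}(z)\widehat{f}(-z)$. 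Hence $F$ is uniquely determined by the data $|\widehat{f}|$ as an entire function of exponential type at most $4\pi R$.

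The next step is to recover $\widehat{f}$ from $F$. Since $F$ is entire of finite order, Hadamard factorization reconstructs it from its zero multiset together with a controlled exponential factor whose parameters are pinned down by the growth. The zeros of $F$ split as $Z \sqcup (-Z)$, where $Z$ is the zero multiset of $\widehat{f}$, and the reality identity $\widehat{f}(-\overline{z}) = \overline{\widehat{f}(z)}$ forces $Z$ to be closed under the involution $z \mapsto -\overline{z}$. I would then argue that this symmetry, together with the prescribed exponential type, determines the partition of the zeros of $F$ between $Z$ and $-Z$ uniquely, up to the translation and sign ambiguities that $|\widehat{f}|$ structurally cannot see. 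Once $Z$ is known, Hadamard reconstructs $\widehat{f}$ up to a unimodular constant, which is fixed by matching $|\widehat{f}|$, and Fourier inversion returns $f$.

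The main obstacle is the second step: cataloging the possible ``zero-flippings'' in the factorization $F(z) = \widehat{f}(z)\widehat{f}(-z)$ and showing that the reality constraint removes all nontrivial ones. Particular care is needed for zeros lying on the imaginary axis (the fixed locus of $z \mapsto -\overline{z}$), for zeros of higher multiplicity, and for correctly matching the exponential types on both sides. Once this combinatorial bookkeeping is settled, the rest of the argument is a standard application of Paley--Wiener theory and Hadamard factorization, both of which are classical in this setting.
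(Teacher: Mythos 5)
A preliminary remark: the paper itself offers no proof of this proposition --- it is quoted from the literature (Akutowicz, Hofstetter, Walther, Klibanov--Sacks--Tikhonravov, Grohs--Koppensteiner--Rathmair) --- so your attempt can only be measured against the standard arguments in those sources. Your setup coincides with theirs: Paley--Wiener makes $\widehat{f}$ entire of exponential type, the hypothesis continues to $\widehat{f}(-\overline{z}) = \overline{\widehat{f}(z)}$, and the data $|\widehat{f}|^2$ extends analytically to $F(z) = \widehat{f}(z)\widehat{f}(-z)$, reducing everything to deciding how the zeros of $F$ are apportioned between the two factors. Up to that point the proposal is correct and is the canonical route.

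The gap is that the step you defer is the entire content of the argument, and the claim you defer --- that the reality constraint ``removes all nontrivial zero-flippings'' --- is not something the symmetry alone delivers. The classical ambiguity replaces a zero $z_0$ of $\widehat{f}$ by $\overline{z_0}$ without changing $|\widehat{f}|$ on $\mathbb{R}$. Invariance of the zero multiset under $z \mapsto -\overline{z}$ only forbids flipping one member of a conjugate pair in isolation: flipping the pair $\{z_0, -\overline{z_0}\}$ jointly to $\{\overline{z_0}, -z_0\}$ preserves the symmetry, the exponential type, and the modulus on the real axis, and doing this for some pairs but not others generically yields a real, compactly supported $g$ with $|\widehat{g}| = |\widehat{f}|$ that is not a translate, reflection, or sign change of $f$ (compare $f = u * v$ with $g = u * \tilde{v}$, $\tilde{v}(x) = v(-x)$, for real compactly supported $u,v$). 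So the combinatorial step you postpone cannot be settled in the affirmative at the level of generality you set up; the cited references use exactly this machinery to \emph{catalogue} a generically large ambiguity class, and their uniqueness statements carry hypotheses calibrated to kill these flips. Note also that you only claim to recover $\widehat{f}$ ``up to the translation and sign ambiguities,'' which is already weaker than the literal assertion of the proposition (indeed $f(\cdot - a)$ satisfies all the stated hypotheses and shares $|\widehat{f}|$, so some normalization is implicitly intended in the statement); that mismatch is a symptom of the same unresolved step rather than a separate defect.
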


We are interested in the stability question: let us fix $f,g \in L^2(\mathbb{R})$. If $|\widehat{f}| \sim |\widehat{g}|$ are close in $L^2$, does this necessarily imply that $f$ and $g$ themselves are close in $L^2$? Without any further assumptions, this is certainly wrong: for any function $h:\mathbb{R} \rightarrow \left\{-1,1\right\}$, we can define
$$ \widehat{g}(\xi) = h(\xi) \widehat{f}(\xi)$$
which results in them having the same modulus but there is absolutely no reason for them to be close to one another in $L^2$. This shows that at least one more assumption is needed. We will show that such stability results become possible if we assume that $f$ and $g$ are close in $L^p$ for some $1 \leq p < 2$.

\section{The Result}
\subsection{The simplest case.}
We present the principle first in its simplest form. This is not the most general formulation but maybe the one that is most easily visualized.
\begin{quote}
\textbf{Fact.} For any two functions $f,g \in L^1(\mathbb{R}^n) \cap L^2(\mathbb{R}^n)$, if
\begin{enumerate}
\item  $\mbox{supp}(\widehat{f})$ has finite measure,
\item $g$ is close to $f$ in $L^1$ and 
\item  $|\widehat{f}|$ is close to $|\widehat{g}|$ in $L^2$,
\end{enumerate}
 then $f$ is close to $g$ in $L^2$ (up to the translation symmetry).
\end{quote}

\begin{center}
\begin{figure}[h!]
\begin{tikzpicture}[scale=1]
\draw [thick, <->] (-5, 0) -- (5,0);
\draw [ultra thick] (-3, 0) to[out=30, in =180] (-0.2,0.5);
\draw [thick, ->] (0,0) -- (0,3.5);
\draw [ultra thick] (0.2, 0.5) to[out=0, in =150] (3,0);
\draw [ultra thick] (-0.2, 0.5) -- (-0.1, 3) -- (0.1, 3) -- (0.2, 0.5);
\draw [ultra thick] (-2.7, 0) to[out=30, in =180] (0.1,0.5);
\draw [ultra thick] (0.5, 0.5) to[out=0, in =150] (3.3,0);
\draw [ultra thick] (0.1, 0.5) -- (0.2, 3) -- (0.4, 3) -- (0.5, 0.5);
\end{tikzpicture}
\caption{An even function $f$ and $g(x) = f(x-\varepsilon)$. They are close in $L^1$ and $|\widehat{f}| = |\widehat{g}|$ but they are not close in $L^2$. Any stability estimate needs to compensate for this translation symmetry.}
\end{figure}
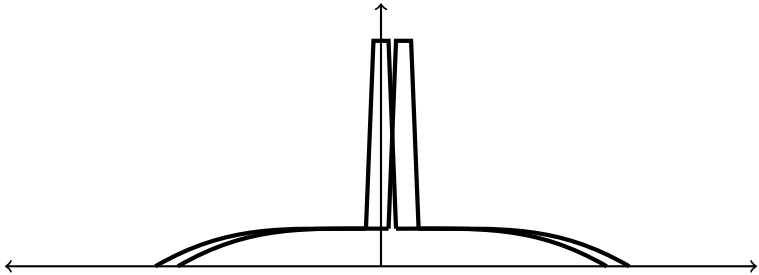
\end{center}
\vspace{-15pt}

We quickly describe how this could be interpreted. Let us first specify what it means for two function $f,g$ to be close in $L^1$ but not in $L^2$: it indicates that $f-g$ has concentrations of $L^1-$mass or, equivalently, that $|f-g|$ assumes large values over a small interval scaled in such a way that the contribution in $L^1$ is not substantial but becomes substantial in $L^2$. One way of phrasing the general principle is that in such a case we are either dealing with a translation symmetry, $g$ is close to a shift of $f$, or this behavior becomes visible in $\| |\widehat{f}| - |\widehat{g}| \|_{L^2}$: strong localized mass translates into slow decay of the Fourier transform.  Assuming a smoothness condition on $f$ we obtain for large frequencies $\xi$ that $||\widehat{f}(\xi)| - |\widehat{g}(\xi)|| \sim | \widehat{g}(\xi)|$ and we can recover the slow decay this way. Naturally, there are other interpretations.
We introduced this principle in the case where $f$ satisfies a very strong smoothness condition and has a real-valued Fourier transform that is compactly supported. The general result does not require any such conditions.

\begin{corollary} Let $f \in L^1(\mathbb{R}^n) \cap L^2(\mathbb{R}^n)$ have a real-valued Fourier transform supported on a set of measure 
$ L = | \{\xi \in \mathbb{R}^n: \widehat{f}(\xi) \neq 0\}|.$
For all $g \in L^1(\mathbb{R}^n) \cap L^2(\mathbb{R}^n)$ 
 $$ \| f-g\|_{L^2(\mathbb{R}^n)} \leq  2\cdot  \| |\widehat{f}| - |\widehat{g}| \|_{L^2(\mathbb{R}^n)}  + 30 \sqrt{L} \cdot \|f-g\|_{L^1(\mathbb{R}^n)}+  2\| \Imn \widehat{g} \|_{L^2(\mathbb{R}^n)}.$$
\end{corollary}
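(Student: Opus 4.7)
The starting point is Plancherel: $\|f - g\|_{L^2} = \|\widehat{f} - \widehat{g}\|_{L^2}$, so the goal is to control the right-hand side. The key construction is a phase-aligned surrogate of $\widehat{g}$. Let $E = \{\xi : \widehat{f}(\xi) \neq 0\}$, so $|E| = L$; set $\sigma(\xi) := \mathrm{sign}(\widehat{f}(\xi))$ on $E$ and $\sigma(\xi) := \widehat{g}(\xi)/|\widehat{g}(\xi)|$ on $E^c$ (with $\sigma = 0$ where $\widehat{g}$ vanishes too). Define $h := \sigma\, |\widehat{g}|$. By construction $|\widehat{f} - h| = \big||\widehat{f}| - |\widehat{g}|\big|$ pointwise: on $E$ because $\widehat{f} - h = \sigma(|\widehat{f}| - |\widehat{g}|)$ and $|\sigma| = 1$, on $E^c$ because $\widehat{f} = 0$ while $h = \widehat{g}$. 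Hence
\[
\|\widehat{f} - h\|_{L^2} = \big\||\widehat{f}| - |\widehat{g}|\big\|_{L^2},
\]
and the triangle inequality reduces matters to bounding $\|h - \widehat{g}\|_{L^2}$.

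Writing $\widehat{g} = R + iI$ with $R, I$ real, the residual $h - \widehat{g}$ vanishes on $E^c$; on $E$, a short computation gives $|h - \widehat{g}|^2 = (|\widehat{g}| - \sigma R)^2 + I^2$. The guiding identity is
\[
(|\widehat{g}| - \sigma R)\,(|\widehat{g}| + \sigma R) = |\widehat{g}|^2 - R^2 = I^2,
\]
which suggests splitting $E = E_+ \cup E_-$ by the sign of $\sigma R$. On $E_+ = \{\sigma R \geq 0\}$, $|\widehat{g}| + \sigma R \geq |\widehat{g}| - \sigma R \geq 0$ forces $(|\widehat{g}| - \sigma R)^2 \leq I^2$, so $|h - \widehat{g}|^2 \leq 2I^2$ there. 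On $E_- = \{\sigma R < 0\}$, $\widehat{f}$ and $R$ have opposite signs, so $|\widehat{f} - R| = |\widehat{f}| + |R| \geq |R|$; combined with $|\widehat{f} - R| \leq |\widehat{f} - \widehat{g}| \leq \|\widehat{f - g}\|_{L^\infty} \leq \|f - g\|_{L^1}$, this gives the pointwise bound $|R(\xi)| \leq \|f - g\|_{L^1}$ on $E_-$. Since $|E_-| \leq L$, this upgrades to the $L^2$ estimate $\|R\|_{L^2(E_-)} \leq \sqrt{L}\, \|f - g\|_{L^1}$, which in turn controls $|\widehat{g}| - \sigma R = |\widehat{g}| + |R|$ on $E_-$ up to a harmless multiple of $|I|$.

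Assembling the contributions from $E_+$ and $E_-$ and taking square roots bounds $\|h - \widehat{g}\|_{L^2}$ by an absolute constant times $\sqrt{L}\,\|f - g\|_{L^1} + \|\Imn \widehat{g}\|_{L^2}$, and plugging into the triangle inequality yields the claimed estimate with constants comfortably below the stated $2$, $30$, $2$. The main obstacle is controlling $R$ on the misaligned set $E_-$: a priori $|R|$ can be as large as $|\widehat{g}|$, but the sign constraint $\sigma R < 0$ forces $|R| \leq |\widehat{f} - R|$, after which the Hausdorff--Young bound $\|\widehat{f - g}\|_{L^\infty} \leq \|f - g\|_{L^1}$ takes over and the hypothesis $|E| = L$ converts the pointwise bound into an $L^2$ bound.
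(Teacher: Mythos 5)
Your argument is correct, and it is a genuinely different route from the paper's. The paper obtains Corollary 1 as an immediate specialization of the main Theorem: take $p=1$, note that $h_f(x)=\bigl(8\int_{|\widehat{f}|\leq 10x}|\widehat{f}|^2\,d\xi\bigr)^{1/2}\leq\bigl(800\,L\,x^2\bigr)^{1/2}\leq 30\sqrt{L}\,x$ because the integrand is supported on a set of measure at most $L$ and bounded by $(10x)^2$ there, and observe that for real-valued $\widehat{f}$ the invariance term $\Imn\,\overline{\widehat{f}}\,|\widehat{f}|^{-1}\widehat{g}$ reduces to $\pm\Imn\widehat{g}$; the heavy lifting (the splitting at $|\widehat{f}(\xi)|\geq 10\varepsilon$ and the geometric Lemma 1) all lives in the Theorem's proof. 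You instead give a direct, self-contained proof via the phase-aligned surrogate $h=\sigma|\widehat{g}|$, which replaces the paper's Lemma 1 by the elementary factorization $(|\widehat{g}|-\sigma R)(|\widehat{g}|+\sigma R)=I^2$ and replaces the $10\varepsilon$-threshold dichotomy by the sign dichotomy $E_\pm=\{\pm\sigma R\geq 0\}$; the only shared ingredient is the endpoint Hausdorff--Young bound $\|\widehat{f}-\widehat{g}\|_{L^\infty}\leq\|f-g\|_{L^1}$, which you use on the misaligned set $E_-$ where the sign constraint forces $|R|\leq|\widehat{f}-R|$. I checked the pointwise estimates: $|h-\widehat{g}|^2\leq 2I^2$ on $E_+$ and $|h-\widehat{g}|^2\leq 8R^2+3I^2$ on $E_-$ with $|R|\leq\|f-g\|_{L^1}$ there, giving $\|h-\widehat{g}\|_{L^2}\leq 2\sqrt{2}\sqrt{L}\,\|f-g\|_{L^1}+\sqrt{3}\,\|\Imn\widehat{g}\|_{L^2}$, so your constants (in fact with leading constant $1$ rather than $2$ on the first term) are indeed better than the stated ones. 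What your approach buys is simplicity and sharper constants; what it gives up is generality, since the sign function $\sigma=\mathrm{sign}(\widehat{f})$ and the opposite-sign argument on $E_-$ lean on $\widehat{f}$ being real-valued and compactly supported, whereas the paper's Theorem handles complex $\widehat{f}$, general $1\leq p<2$, and unbounded support through the nonlinear $h_f$.
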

This result shows stability of the phase retrieval problem of a function $f$ with real-valued and compactly supported Fourier transform in $L^1(\mathbb{R}^n) \cap L^2(\mathbb{R}^n)$ up to the translation symmetry. If the Fourier transform was not real-valued and merely compactly supported, we obtain the same result with $\| \Imn \widehat{g} \|_{L^2}$, the term accounting for the translation symmetry, replaced by a slightly more general expression which we discuss below.
 It is not difficult to see that Corollary 1 has the sharp scaling and the optimal dependence on $L$: let $\phi:\mathbb{R} \rightarrow \mathbb{R}$ be an even, nonnegative and compactly supported $C^{\infty}-$function in $[-1,1]$ and set
 $$ \widehat{f}(\xi) = \frac{1}{L}\phi\left(\frac{\xi}{L}\right)  \quad \mbox{and} \quad \widehat{g}(\xi) = -f(\xi).$$
We have $|\widehat{f}| = |\widehat{g}|$, $\Imn \widehat{g} \equiv 0$,
$ f(x) = \widehat{\phi}(Lx)$ and thus, as $L$ becomes large,
$$ \|f-g\|_{L^2} \sim \frac{1}{\sqrt{L}} \qquad \mbox{and} \qquad \|f-g\|_{L^1} \sim \frac{1}{L}.$$
Therefore Corollary 1 is optimal up to constants.
Once $f$ is less smooth, the $L^1-$distance stops acting linearly, the bound moves from Lipschitz to H\"older. 
\begin{corollary} Let $f \in L^1(\mathbb{R}^n)\cap L^2(\mathbb{R}^n)$ have a real-valued Fourier transform and have its $k-$th derivative in $L^1(\mathbb{R})$ where $k>(n+2)/2$. Then, for some constant $c_f>0$ depending only on $f$ and all $g \in L^1(\mathbb{R}^n) \cap L^2(\mathbb{R}^n)$ ,
 $$ \| f-g\|_{L^2(\mathbb{R}^n)} \leq    2\cdot \| |\widehat{f}| - |\widehat{g}| \|_{L^2(\mathbb{R}^n)}  + c_f  \|f-g\|^{1-\frac{n}{2k}}_{L^1(\mathbb{R}^n)}+  2\| \Imn \widehat{g} \|_{L^2(\mathbb{R}^n)}.$$
\end{corollary}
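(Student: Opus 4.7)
The strategy is to adapt the proof of Corollary~1 by inserting a frequency decomposition, so that the compactness of $\mathrm{supp}(\widehat f)$ is replaced by the effective decay afforded by the smoothness assumption. The starting point is the Plancherel identity
\[
\|f-g\|_{L^2}^2 = \||\widehat f|-|\widehat g|\|_{L^2}^2 + 2\int_{\mathbb{R}^n}\bigl(|\widehat f||\widehat g|-\Rem(\widehat f\,\overline{\widehat g})\bigr)d\xi,
\]
which isolates the crucial cross-term. Writing $\widehat g=u+iv$ with $u=\Rem\widehat g$, $v=\Imn\widehat g$, and using that $\widehat f$ is real, the integrand equals $|\widehat f|(\sqrt{u^2+v^2}-su)$ with $s=\mathrm{sign}(\widehat f)$, or equivalently $|\widehat f|\,v^2/(|\widehat g|+su)$. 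I will also use throughout the bound $\|\widehat f-\widehat g\|_{L^\infty}\leq\|f-g\|_{L^1}$, which gives the pointwise control $|\widehat f-u|\leq\|f-g\|_{L^1}$ and $|v|\leq\|f-g\|_{L^1}$.

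Partition $\mathbb{R}^n$ into the ``good'' set $G=\{|\widehat f|\geq 2\|f-g\|_{L^1}\}$ and its complement. On $G$ the inequality $|u-\widehat f|\leq|\widehat f|/2$ forces $\mathrm{sign}(u)=s$ and $|u|\geq|\widehat f|/2$, so $|\widehat g|+su\geq|\widehat f|$ and the integrand is at most $v^2$; the contribution from $G$ is therefore at most $\|\Imn\widehat g\|_{L^2}^2$. On the complementary bad set, where $|\widehat f|<2\|f-g\|_{L^1}$, I use the cruder pointwise estimate $|\widehat f|(\sqrt{u^2+v^2}-su)\leq 2|\widehat f||\widehat g|$ and split the integral by frequency into $|\xi|\leq R$ and $|\xi|>R$, with $R>0$ to be optimised. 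Over $B_R$, combining $|\widehat f|\leq 2\|f-g\|_{L^1}$ with $|\widehat g|\leq|\widehat f|+\bigl||\widehat f|-|\widehat g|\bigr|$ and Cauchy--Schwarz (using $|B_R|=c_n R^n$) yields a contribution of order $R^n\|f-g\|_{L^1}^2 + R^{n/2}\|f-g\|_{L^1}\||\widehat f|-|\widehat g|\|_{L^2}$. Over $\{|\xi|>R\}$, the smoothness hypothesis $k>(n+2)/2$ gives $|\widehat f(\xi)|\lesssim c_f(1+|\xi|)^{-k}$ and hence $\|\widehat f\|_{L^2(|\xi|>R)}\leq c'_f R^{(n-2k)/2}$, so this piece contributes at most $c''_f\bigl(R^{n-2k}+R^{(n-2k)/2}\||\widehat f|-|\widehat g|\|_{L^2}\bigr)$.

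Collecting these bounds and using the weighted AM--GM inequality $2ab\leq 3a^2+b^2/3$ on the two cross terms that carry a factor of $\||\widehat f|-|\widehat g|\|_{L^2}$, I obtain
\[
\|f-g\|_{L^2}^2 \leq 4\||\widehat f|-|\widehat g|\|_{L^2}^2 + 2\|\Imn\widehat g\|_{L^2}^2 + C_f\bigl(R^n\|f-g\|_{L^1}^2 + R^{n-2k}\bigr).
\]
Taking a square root via $\sqrt{A^2+B^2+C^2}\leq A+B+C$ (and absorbing $\sqrt 2\leq 2$) and choosing $R=\|f-g\|_{L^1}^{-1/k}$ to equalise the two $R$-dependent terms renders each of them equal to $\|f-g\|_{L^1}^{1-n/(2k)}$, producing the announced bound. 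I expect the main delicate point to be the AM--GM bookkeeping in the final step: the absorptions must be arranged so that exactly the coefficient $4=2^2$ appears in front of $\||\widehat f|-|\widehat g|\|_{L^2}^2$, because this is what produces the leading constant $2$ in the inequality; the high-frequency tail estimate is routine and is the only place where the smoothness hypothesis is used.
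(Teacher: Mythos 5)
Your argument is correct, and it reaches the corollary by a route that differs in its mechanics from the paper's. The paper obtains Corollary~2 as an immediate consequence of the general Theorem (whose proof thresholds on $|\widehat f(\xi)|\ge 10\varepsilon$ and invokes Lemma~1, a planar inequality valid for arbitrary complex $\widehat f$) combined with Lemma~2, which bounds $\int_{|\widehat f|\le 10\varepsilon}|\widehat f|^2$ by exactly your split at $|\xi|=\varepsilon^{-1/k}$. You instead start from the exact Plancherel identity isolating the cross term $|\widehat f||\widehat g|-\Rem(\widehat f\,\overline{\widehat g})$, and on the good set $\{|\widehat f|\ge 2\varepsilon\}$ you replace Lemma~1 by the algebraic identity $\sqrt{u^2+v^2}-su=v^2/(|\widehat g|+su)$, which uses that $\widehat f$ is real-valued in an essential way; this yields the clean bound by $(\Imn\widehat g)^2$ with constant $1$ and no modulus-difference contribution from the good set, and in fact your final coefficient on $\|\Imn\widehat g\|_{L^2}$ is $\sqrt2$ rather than $2$. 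What you lose is generality: the identity $\Rem(\widehat f\,\overline{\widehat g})=\widehat f\,u$ breaks down for complex $\widehat f$, so your argument does not yield the Theorem (with its $\Imn\overline{\widehat f}|\widehat f|^{-1}\widehat g$ term) or the $1<p<2$ cases, whereas the paper's factorization through the Theorem covers all of these at once. Your bookkeeping is also sound: the AM--GM absorptions give a coefficient $5/3$ on $\||\widehat f|-|\widehat g|\|_{L^2}^2$, comfortably below the $4$ you quote, so the leading constant $2$ survives the square root; the only cosmetic quibble is that nothing forces the coefficient to be exactly $4$ --- any value at most $4$ suffices.
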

\begin{center}
\begin{figure}[h!]
\begin{tikzpicture}[scale=1]
\node at (0,0) {\includegraphics[width=0.6\textwidth]{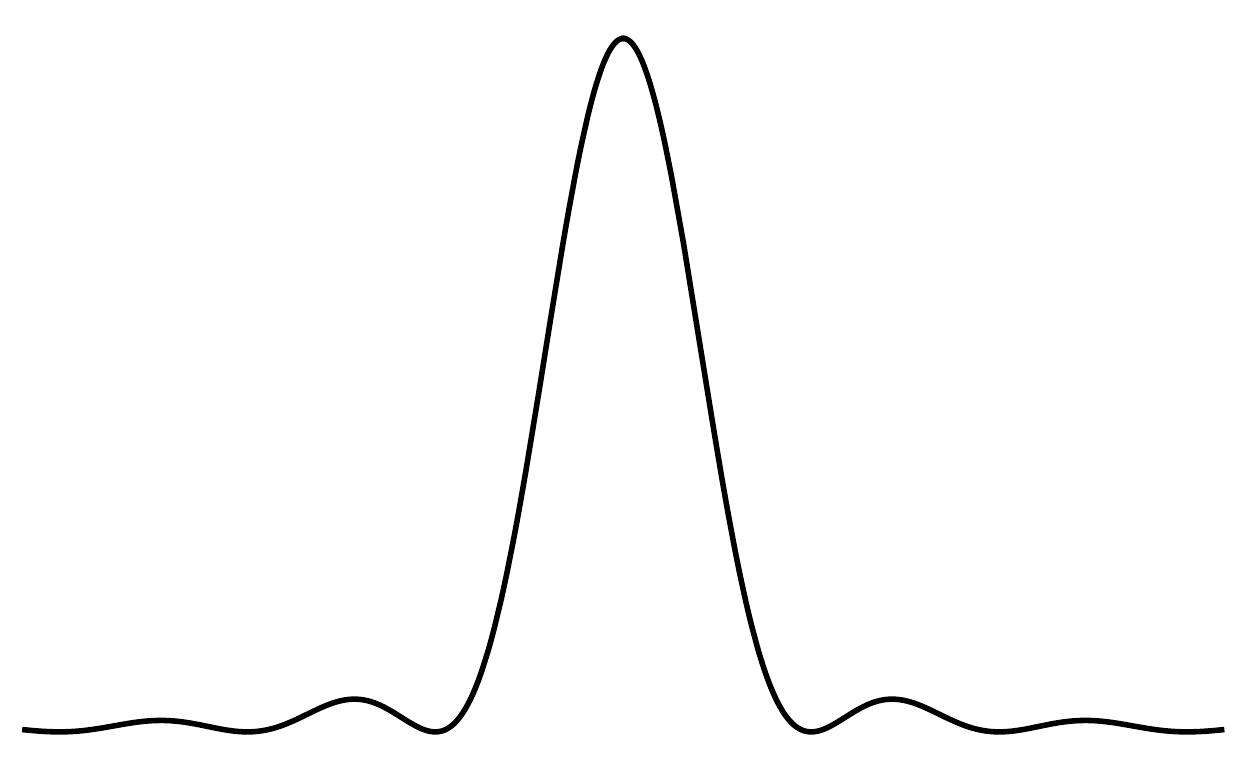}};
\draw [thick,<->] (-4,-2.1) -- (4,-2.1);
\end{tikzpicture}
\caption{$(1-\cos{x})x^{-2}$ has super-linear stability in $L^1$.}
\end{figure}
\end{center}
The result is also applicable to specific functions (see Fig. 2). We discuss
$ f(x) =(1-\cos{x})x^{-2}$ satisfying $\widehat{f}(\xi) = \max\left\{0,1-|x|\right\}$. An application of the Theorem shows that for some universal constant $c>0$ and any even function $g:\mathbb{R} \rightarrow \mathbb{R}$,
$$  \| f-g\|_{L^2(\mathbb{R})}  \leq    2 \cdot \| |\widehat{f}| - |\widehat{g}| \|_{L^2(\mathbb{R})} +  c \min\left\{ \|f-g\|^{3/2}_{L^1(\mathbb{R})},  \|f-g\|^{}_{L^1(\mathbb{R})}\right\}.$$
The second estimate follows from Corollary 1 since $f$ is band-limited. The first bound, decaying faster than linearly for $f$ close to $g$ in $L^1$, follows from using the explicit form of the Theorem. This inequality could also easily be directly proven since all the terms are real. We see that the Fourier Phase Retrieval problem for $f$ is quite stable in the space of symmetric perturbations as soon as $f$ and $g$ are quite close in $L^1$ (and $g$ is even; otherwise the translation symmetry makes any approximation faster than linear impossible).

\subsection{The General Result.} 
We will now describe the general stability result for functions in $L^p(\mathbb{R}^n) \cap L^2(\mathbb{R}^n)$ where $1 \leq p < 2$. 
The result has the same form as the results as above: the $L^2-$distance of two functions $f$ and $g$ is bounded from above by the sum of three terms: (a) the $L^2-$distance of $|\widehat{f}|$ and $|\widehat{g}|$, (b) the distance of $f$ and $g$ in $L^p$ where $1 \leq p <2$, in a  way depending on the smoothness of $f$, and (c) a term accounting for the invariance under translations.

\begin{thm} Let $1 \leq p < 2$ and  $f \in L^p(\mathbb{R}^n) \cap L^2(\mathbb{R}^n)$. We define $h_f:\mathbb{R}_{\geq 0 } \rightarrow \mathbb{R}_{\geq 0 }$
$$ h_f(x) =  \left(8 \int_{|\widehat{f}(\xi)| \leq 10x}  | \widehat{f}(\xi)  |^2 d\xi\right)^{1/2} + \begin{cases} x \qquad &\mbox{if}~~p>1 \\ 0 \qquad &\mbox{if}~ p=1.\end{cases}$$
Then, for all $g \in L^1(\mathbb{R}^n) \cap L^2(\mathbb{R}^n)$,
$$  \| f-g\|_{L^2(\mathbb{R}^n)}  \leq   2\cdot \| |\widehat{f}| - |\widehat{g}| \|_{L^2(\mathbb{R}^n)} +   h_f\left( \|f-g\|^{}_{L^p(\mathbb{R}^n)}\right) + 2 \left\|  \Imn  \overline{\widehat{f}} |\widehat{f}|^{-1}\widehat{g} \right\|_{L^2(\mathbb{R}^n)}^2.$$
\end{thm}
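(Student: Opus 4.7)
The plan is to apply Plancherel and work entirely on the Fourier side, $\|f-g\|_{L^2}=\|\widehat f-\widehat g\|_{L^2}$. Setting $\phi=\widehat f/|\widehat f|$ on $\{\widehat f\ne 0\}$ (unimodular, $\phi=1$ elsewhere) and writing $\overline{\phi}\widehat g=a+ib$, the integrand in the translation-invariance term of the theorem is exactly $b=\Imn(\overline{\phi}\widehat g)$, and one has the pointwise identity
\[
|\widehat f-\widehat g|^2 \;=\; (|\widehat f|-a)^2+b^2.
\]
The whole argument consists of extracting $\||\widehat f|-|\widehat g|\|_{L^2}$, $\|b\|_{L^2}$, and the $h_f$ error from this expression.

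The next step is to split $\mathbb{R}^n=A\cup B$ by the sign of $a$, i.e.\ whether $\widehat f(\xi)$ and $\widehat g(\xi)$ sit within angular distance $\pi/2$. On $A=\{a\ge 0\}$ we have $a=\sqrt{|\widehat g|^2-b^2}\ge|\widehat g|-|b|$, hence $|\widehat f|-a\le ||\widehat f|-|\widehat g||+|b|$ when $|\widehat f|\ge a$, and in the subcase $a>|\widehat f|$ (which forces $|\widehat g|\ge|\widehat f|$ and $a\le|\widehat g|$) one has even the cleaner $(|\widehat f|-a)^2\le (|\widehat f|-|\widehat g|)^2$. Integrating and applying Minkowski in $L^2$ yields
\[
\Bigl(\int_A|\widehat f-\widehat g|^2\Bigr)^{1/2}\;\le\;\||\widehat f|-|\widehat g|\|_{L^2(A)}+2\|b\|_{L^2(A)},
\]
which already produces the translation term together with its prefactor.

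On $B=\{a<0\}$ the vectors $\widehat f(\xi),\widehat g(\xi)$ form an obtuse angle, whence $|\widehat f|\le|\widehat f-\widehat g|$ and $|\widehat g|\le|\widehat f-\widehat g|$ pointwise. The driving input is now Hausdorff--Young, $\|\widehat f-\widehat g\|_{L^{p'}}\le\|f-g\|_{L^p}=:x$. I would split $B$ at the threshold $10x$: on $B\cap\{|\widehat f-\widehat g|\le 10x\}$ one has $|\widehat f|\le 10x$ pointwise, so this portion lies inside $\{|\widehat f|\le 10x\}$, and expanding $(|\widehat f|+|a|)^2+b^2\le 2(||\widehat f|-|\widehat g||+2|\widehat g|)^2+b^2$ together with $|\widehat g|\le|\widehat f-\widehat g|$ and a Cauchy--Schwarz step controls $\int|\widehat f-\widehat g|^2$ there by the $\sqrt 8\bigl(\int_{|\widehat f|\le 10x}|\widehat f|^2\bigr)^{1/2}$ piece of $h_f$ plus a second copy of $\||\widehat f|-|\widehat g|\|_{L^2}$ (giving the total factor $2$ on that term). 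On $B\cap\{|\widehat f-\widehat g|>10x\}$, Chebyshev bounds the measure by $10^{-p'}$ and H\"older returns an $L^2$-tail $\lesssim 10^{1-p'/2}x$; this tail is empty when $p=1$ (then $p'=\infty$ and $|\widehat f-\widehat g|\le x$ pointwise) and it is precisely what forces the extra $+x$ in $h_f$ when $1<p<2$.

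The main obstacle is exactly this $1<p<2$ regime: because Hausdorff--Young no longer yields a pointwise bound on $\widehat f-\widehat g$, the tail set $\{|\widehat f-\widehat g|>10x\}$ does not automatically sit inside $\{|\widehat f|\le 10x\}$ and must be handled by the weak-type argument, which is the sole reason $h_f$ picks up a linear term for $p>1$. A secondary bookkeeping task is calibrating the constant $\sqrt 8$ in the definition of $h_f$ large enough to swallow the AM-GM and Cauchy--Schwarz losses from the estimates on $B$, while simultaneously ensuring that the $A$- and $B$-contributions add up with the clean prefactor $2$ on $\||\widehat f|-|\widehat g|\|_{L^2}$.
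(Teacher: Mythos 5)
Your proof is correct in substance but follows a genuinely different decomposition from the paper's. The paper splits frequency space by the size of $|\widehat f|$ relative to $10\varepsilon$ and then, on $\{|\widehat f|\ge 10\varepsilon\}$, by whether $|\widehat f-\widehat g|\le\varepsilon$; the good region is handled by a perturbative geometric lemma valid only when $|z-w|\le |w|/2$ (the paper's Lemma 1), which produces $(|\widehat f|-a)^2\le(|\widehat f|-|\widehat g|)^2+\tfrac15 b^2$. You instead split by the sign of $a=\Rem(\overline{\widehat f}|\widehat f|^{-1}\widehat g)$, and on the acute-angle set your global elementary bound $a=\sqrt{|\widehat g|^2-b^2}\ge|\widehat g|-|b|$ gives $||\widehat f|-a|\le||\widehat f|-|\widehat g||+|b|$ with no smallness hypothesis, so the lemma is not needed at all; the obtuse-angle set automatically satisfies $|\widehat f|\le|\widehat f-\widehat g|$, which feeds the same Hausdorff--Young/Chebyshev tail argument and the same elementary inequality the paper uses on $\{|\widehat f|\le10\varepsilon\}$. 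Your constant bookkeeping does close: the $A$-region contributes $\||\widehat f|-|\widehat g|\|_{L^2(A)}$ and the $B_1$-region contributes $\sqrt2\,\||\widehat f|-|\widehat g|\|_{L^2(B_1)}$, and since $A$ and $B_1$ are disjoint, Cauchy--Schwarz gives a total coefficient $\sqrt3<2$, while $\sqrt2\,\|b\|_{L^2(A)}\le 2\|b\|_{L^2}$. One local slip: in the $B_1$ estimate you write $|\widehat f-\widehat g|\le||\widehat f|-|\widehat g||+2|\widehat g|$ and then invoke $|\widehat g|\le|\widehat f-\widehat g|$, which would be circular (it returns $8|\widehat f-\widehat g|^2$ on the right); the correct move is $|\widehat f-\widehat g|\le||\widehat f|-|\widehat g||+2\min(|\widehat f|,|\widehat g|)\le||\widehat f|-|\widehat g||+2|\widehat f|$, after which $2(||\widehat f|-|\widehat g||)^2+8|\widehat f|^2$ integrates over $B_1\subseteq\{|\widehat f|\le10x\}$ to give exactly the $h_f$ piece. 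What your route buys is the elimination of the auxiliary lemma and a cleaner geometric picture (acute versus obtuse angle); what the paper's route buys is a pointwise inequality with a small coefficient $\tfrac15$ on $b^2$ on the good set, which is why its constants assemble with essentially no Cauchy--Schwarz loss.
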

The result is very much in the same flavor as the results above: we have quantitative dependence on $\|f-g\|_{L^p}$ whose rate
depends on the smoothness of $f$. If $\widehat{f}$ is supported on a set of measure $L$, then $h_f$ grows at most linearly since
$$  \left(8 \int_{|\widehat{f}(\xi)| \leq 10x}  | \widehat{f}(\xi)  |^2 d\xi\right)^{1/2} \leq \left(8 \int_{|\widehat{f}(\xi)| \leq 10x} (10x)^2d\xi \right)^{1/2} \leq 30 \sqrt{L} x.$$
In the special case $p=1$, it might even have smaller growth (as seen in the example above).
If $\widehat{f}$ is real-valued, then the last term simplifies to
$$  \Imn  \overline{\widehat{f}} |\widehat{f}|^{-1}\widehat{g} =  \Imn \widehat{g}$$
which recovers the previous results. We quickly illustrate the meaning of this term for smooth functions $f,g \in L^2$ by considering the case $g(x) = f(x-\varepsilon)$. Their Fourier transforms have the same modulus. As $\varepsilon \rightarrow 0$ and for fixed $\xi \in \mathbb{R}$ the term $1-\cos{(\varepsilon \xi)}$ is quadratic in $\varepsilon$ while $i\sin{(\varepsilon \xi)}$ is linear in $\varepsilon$ and thus
\begin{align*}
\|f - g\|_{L^2} &= \| \widehat{f}(\xi) - e^{- i\varepsilon \xi} \widehat{f}(\xi)\|_{L^2} =
\| \widehat{f}(\xi) \left(1 - \cos{(\varepsilon \xi)} + i \sin{(\varepsilon \xi)}\right) \||_{L^2}\\
&\sim \| \widehat{f}(\xi) \left(i \sin{(\varepsilon \xi)}\right) \||_{L^2} =  \| \widehat{f}(\xi) \sin{(\varepsilon \xi)}\|_{L^2}
\end{align*}
We see that this term is a genuine $L^2-$quantity that is unlikely to be controlled by $L^p$ for $p<2$. Indeed, it is controlled by the third quantity, since 
$$ | \Imn \overline{\widehat{f}(\xi)} |\widehat{f}(\xi)|^{-1}\widehat{g}(\xi) |  =   | \Imn  \overline{\widehat{f}(\xi)} |\widehat{f}(\xi)|^{-1}\widehat{f}(\xi) e^{-i \varepsilon \xi} | = |\widehat{f}(\xi)\sin{(\varepsilon \xi)}|.$$

\subsection{Main Idea and Extensions.} We quickly illustrate the main idea and how it would allow for even more general results. We focus on the case $p=1$ and $f,g \in L^1(\mathbb{R}) \cap L^2(\mathbb{R})$. Let us suppose 
$ \varepsilon = \|f - g \|_{L^1(\mathbb{R})}$
is small. Then 
$$| \widehat{f}(\xi) - \widehat{g}(\xi)|  \leq \|\widehat{f} - \widehat{g}\|_{L^{\infty}} \leq \|f-g\|_{L^1} = \varepsilon$$
 is uniformly small in $\xi$. Let us now consider a value $\xi$ where $|\widehat{f}(\xi)| \geq 10 \varepsilon$.
 
\begin{center}
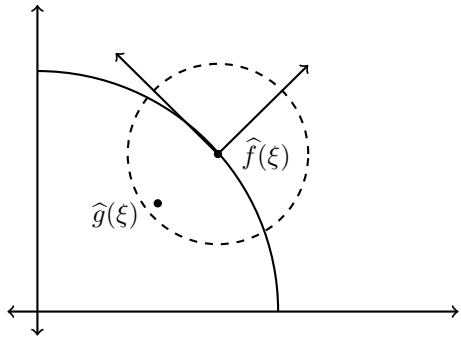
\begin{figure}[h!]
\begin{tikzpicture}[scale=0.8]
\draw [thick,<->] (-4,-2.1) -- (3.5,-2.1);
\draw [thick, <->] (-3.5,-2.5) -- (-3.5, 3);
\filldraw (-0.5,0.52) circle (0.06cm);
\node at (0.3, 0.52) {$\widehat{f}(\xi)$};
\draw [thick, dashed] (-0.5, 0.52) circle (1.5cm);
\draw [thick] (0.5,-2.1) arc (0:90:4);
\draw [thick, ->] (-0.5, 0.52) -- (1, 2);
\draw [thick, ->] (-0.5, 0.52) -- (-2.2, 2.2);
\filldraw (-1.5, -0.3) circle (0.06cm);
\node at (-2.2, -0.5) {$\widehat{g}(\xi)$};
\end{tikzpicture}
\caption{A sketch of the main idea. }
\end{figure}
\end{center}

We can express $\widehat{g}(\xi) = \widehat{f}(\xi) + (\widehat{g}(\xi) - \widehat{f}(\xi))$ and then consider the quantity $ (\widehat{g}(\xi) - \widehat{f}(\xi))$ as a vector in $\mathbb{R}^2 \cong \mathbb{C}$. It is relatively small compared to the size of $\widehat{f}(\xi)$. If it points roughly in the direction of $\widehat{f}(\xi)$, then $|\widehat{f}(\xi) - \widehat{g}(\xi)| \sim | |\widehat{f}(\xi)| - |\widehat{g}(\xi)||$ and the difference in $L^2$ shows up in the modulus. If that is not the case, then $\widehat{g}(\xi) - \widehat{f}(\xi)$ points roughly in
the direction $i\widehat{f}(\xi)$ which means that there is a nontrivial contribution to the subspace corresponding to translations of $f$. 
What is interesting about this idea is that very few properties of the absolute value $|\cdot|: \mathbb{C} \rightarrow \mathbb{R}$ are being used (though the fact that the `critical' subspace that cannot be recovered corresponds to translations of the function $f$ is very much connected to using the absolute value; the more general problem will have other `critical' subspaces that do not have such an easy interpretation). Under some regularity assumptions, the same type of arguments could be used to deal with more general problems of this type. This would allow one, for example, to define $h:\mathbb{C} \rightarrow \mathbb{R}$ via 
$$ h(z) = ((\Rem z)^4 + (\Imn z)^4)^{1/4}$$
and then try to study the phase retrieval problem for $h(\widehat{f}) \sim h(\widehat{g})$ for which similar stability estimates could be obtained. The classical Fourier phase retrieval is well motivated and this is maybe not (or not yet) the case for this generalized problem; however, it does seem interesting that the methods extend.

\section{Proofs}
\subsection{A Lemma.}
\begin{lemma} For all $0 \leq w \in \mathbb{R}$ and all $z \in \mathbb{C}$ satisfying $|z-w| \leq |w|/2$
$$ | w - \Rem z|^2 \leq | w  - |z| |^2 + 2 \left| \frac{z-w}{w} \right| \cdot | \Imn z|^2.$$
\end{lemma}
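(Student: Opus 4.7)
My plan is to parametrize $z = a + bi$ with $a = \Rem z$ and $b = \Imn z$, so that the desired inequality becomes
$$ (w - a)^2 \;\leq\; \bigl(w - \sqrt{a^2+b^2}\bigr)^2 + \frac{2|z-w|}{w}\, b^2.$$
First I would extract what the hypothesis $|z - w| \leq w/2$ gives us coordinate-wise. Since $|z-w|^2 = (a-w)^2 + b^2$, we get both $|b| \leq |z-w|$ and $|a - w| \leq w/2$, and in particular $a \geq w/2 \geq 0$. This nonnegativity of $a$ is what makes the rest of the argument clean.

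Next, I would compute the difference $(w-a)^2 - (w - |z|)^2$ via the algebraic identity
$$ (w-a)^2 - \bigl(w - \sqrt{a^2+b^2}\bigr)^2 \;=\; \bigl(\sqrt{a^2+b^2} - a\bigr)\bigl(2w - a - \sqrt{a^2+b^2}\bigr).$$
The first factor rationalizes as
$$\sqrt{a^2+b^2} - a \;=\; \frac{b^2}{\sqrt{a^2+b^2}+a} \;\leq\; \frac{b^2}{2a} \;\leq\; \frac{b^2}{w},$$
where the last step is precisely where the bound $a \geq w/2$ is used. For the second factor, I would split into cases: if $2w - a - \sqrt{a^2+b^2} \leq 0$, then the entire difference of squares is nonpositive and the inequality is immediate (the right-hand side of the lemma is nonnegative). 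Otherwise, since $\sqrt{a^2+b^2} \geq a$, we can estimate
$$ 0 \;\leq\; 2w - a - \sqrt{a^2+b^2} \;\leq\; 2(w - a) \;\leq\; 2|w - a| \;\leq\; 2|z - w|.$$

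Multiplying the two factor bounds then yields
$$ (w-a)^2 - \bigl(w - |z|\bigr)^2 \;\leq\; \frac{b^2}{w} \cdot 2|z - w| \;=\; 2\left|\frac{z-w}{w}\right| \cdot |\Imn z|^2,$$
which is exactly the inequality claimed. The only step that requires care is ensuring $a \geq w/2$ so that the denominator in the rationalization is comparable to $w$; everything else is arithmetic manipulation. There is no real obstacle, only bookkeeping of signs, and the case split on the sign of $2w - a - |z|$ is what allows the argument to avoid any further use of the hypothesis.
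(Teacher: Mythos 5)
Your proof is correct, and it takes a genuinely different route from the paper. The paper normalizes to $w=1$ via scale invariance, writes $z=(1+x)+iy$, moves the square root to one side, squares both sides, and then verifies nonnegativity of the resulting polynomial expression by an explicit factorization in the disk $x^2+y^2\le 1/4$. You instead factor the difference of squares directly,
$$(w-a)^2-\bigl(w-|z|\bigr)^2=\bigl(|z|-a\bigr)\bigl(2w-a-|z|\bigr),$$
rationalize the first factor as $b^2/(|z|+a)\le b^2/w$ using $a\ge w/2$ (which is exactly where the hypothesis $|z-w|\le w/2$ enters), and bound the second factor by $2|z-w|$ after disposing of the case where it is nonpositive. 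Both arguments are complete; yours avoids the squaring step and the somewhat opaque polynomial factorization, and makes visible precisely where the hypothesis is used and why the error term is quadratic in $\Imn z$, at the cost of a short case split. One cosmetic remark: like the paper's proof, yours implicitly assumes $w>0$ (you divide by $w$), which is harmless since $w=0$ forces $z=0$ and the statement is vacuous or degenerate there; and the observation $|b|\le|z-w|$ that you record is never actually needed.
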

\begin{proof}
Both sides of the inequality are invariant under multiplication with scalars, so we can assume w.l.o.g. that $w=1$. It then remains to show that
$$ | 1 - \Rem z|^2 \leq |1 -  |z|  |^2 + 2 \left| z - 1\right| \cdot | \Imn z|^2 \qquad \mbox{for all}~|z-1| \leq\frac{1}{2}.$$
We make the ansatz $z = (1+x) + i y$ which reduces the desired inequality to
$$ 2 + 2x + y^2 + 2y^2 \sqrt{x^2+y^2}\geq 2\sqrt{(1+x)^2+y^2} $$
for all $ x^2 + y^2 \leq r^2$ for some $r$ to be determined. The left-hand side is certainly positive in the regime that we consider, so we can square both sides and try to verify that identity instead. The desired inequality factors into
$$ X = y^2\left( y^2 + 8\sqrt{x^2+y^2} + 4x + 4x^2 y^2 + 8x\sqrt{x^2+y^2} + 4y^2\sqrt{x^2+y^2}  \right) \geq 0.$$
We define $r$ via $x^2+y^2 = r^2$ and note that for any $r \leq 1/2$
\begin{align*}
\frac{X}{y^2}&\geq 8 \sqrt{x^2+y^2} + 4x + 8x \sqrt{x^2+y^2}   \geq 4r - 8 r^2 \geq 0.
\end{align*}
\end{proof}


\subsection{Proof of the Theorem}
\begin{proof}
Let $1 \leq p < 2$. 
We fix the parameter 
$$ \varepsilon = \|f-g\|_{L^p(\mathbb{R}^n)}$$
and write
$$ \|f-g\|_{L^2(\mathbb{R}^n)}^2 = \int_{\mathbb{R}^n}{ | \widehat{f}(\xi) - \widehat{g}(\xi) |^2 d\xi}.$$
We split this integral into two integrals over disjoint regions
\begin{align*}
 \int_{\mathbb{R}^n}{ | \widehat{f}(\xi) - \widehat{g}(\xi) |^2 d\xi} &= \int_{|\widehat{f}(\xi)| \geq 10\varepsilon}  | \widehat{f}(\xi) - \widehat{g}(\xi) |^2 d\xi\\
 &+ \int_{|\widehat{f}(\xi)| \leq 10\varepsilon}  | \widehat{f}(\xi) - \widehat{g}(\xi) |^2 d\xi.\end{align*}
The remainder of the argument is comprised of estimating these two integrals.\\

\textit{First Integral.}
We split the first integral once more
\begin{align*} \int_{|\widehat{f}(\xi)| \geq 10\varepsilon}  | \widehat{f}(\xi) - \widehat{g}(\xi) |^2 d\xi &=
\int_{|\widehat{f}(\xi)| \geq 10\varepsilon \atop \left|\widehat{f}(\xi) - \widehat{g}(\xi)\right| \leq \varepsilon}  | \widehat{f}(\xi) - \widehat{g}(\xi) |^2 d\xi\\
&+ \int_{\left|\widehat{f}(\xi)\right| \geq 10\varepsilon \atop \left|\widehat{f}(\xi) - \widehat{g}(\xi)\right| \geq \varepsilon}  | \widehat{f}(\xi) - \widehat{g}(\xi) |^2 d\xi. \end{align*}
\textit{First term.} We start by analyzing the first term. 
Fix a $\xi \in \mathbb{R}$ such that $|\widehat{f}(\xi)| \geq 10\varepsilon$ and $|\widehat{f}(\xi) - \widehat{g}(\xi)| \leq \varepsilon$. We interpret $\widehat{f}(\xi)$ (which is not 0) and $i\widehat{f}(\xi)$ as the directions of two orthogonal vectors in $\mathbb{R}^2$ and use them to express
$$ \widehat{g}(\xi) = \widehat{f}(\xi) + a \frac{\widehat{f}(\xi)}{|\widehat{f}(\xi)|} + b \frac{i\widehat{f}(\xi)}{|\widehat{f}(\xi)|} $$
for some unique $a,b \in \mathbb{R}$. We see that $a$ and $b$ cannot be very large since
$$ \sqrt{a^2 + b^2} = | \widehat{f}(\xi) - \widehat{g}(\xi)| \leq \varepsilon \leq \frac{|\widehat{f}(\xi)|}{10}.$$
Multiplying on both sides with $\overline{\widehat{f}(\xi)}/|\widehat{f}(\xi)|$ results in
$$ \frac{\overline{\widehat{f}(\xi)}}{|\widehat{f}(\xi)|} \widehat{g}(\xi) = |\widehat{f}(\xi)| + a + b i.$$
However, to this equation we can apply Lemma 1 with
$$ w = |\widehat{f}(\xi)| \qquad \mbox{and} \qquad z = \frac{\overline{\widehat{f}(\xi)}}{|\widehat{f}(\xi)|} \widehat{g}(\xi)$$
and we obtain
$$ |a|^2 \leq | |\widehat{f}(\xi)| - | \widehat{g}(\xi)||^2 + \frac{1}{5} |b|^2.$$
This, in turn, implies that
$$ |\widehat{f}(\xi) - \widehat{g}(\xi)|^2 = a^2 + b^2 \leq  | |\widehat{f}(\xi)| - | \widehat{g}(\xi)||^2 + \frac{6}{5} |b|^2$$
and thus, recalling the definition of $b$, we get, in this regime, the pointwise estimate
$$ |\widehat{f}(\xi) - \widehat{g}(\xi)|^2 = a^2 + b^2 \leq  | |\widehat{f}(\xi)| - | \widehat{g}(\xi)||^2 + \frac{6}{5} \left| \Imn  \frac{\overline{\widehat{f}(\xi)}}{|\widehat{f}(\xi)|} \widehat{g}(\xi) \right|^2.$$
This concludes our analysis of the first term and we arrive at
\begin{align*}
\int_{|\widehat{f}(\xi)| \geq 10\varepsilon \atop |\widehat{f}(\xi) - \widehat{g}(\xi)| \leq \varepsilon}  | \widehat{f}(\xi) - \widehat{g}(\xi) |^2 d\xi
\leq \| |\widehat{f}| - |\widehat{g}| \|_{L^2(\mathbb{R}^n)}^2 + \frac{6}{5} \left\|  \Imn  \overline{\widehat{f}} |\widehat{f}|^{-1}\widehat{g} \right\|_{L^2(\mathbb{R}^n)}^2.
\end{align*}
\textit{Second term.} We now estimate the second term in the first integral. We recall that if $p=1$, then
$$ \| \widehat{f} - \widehat{g}\|_{L^{\infty}} \leq \|f-g\|_{L^1} = \varepsilon$$
and the domain of integration
$$ X = \left\{\xi \in \mathbb{R}^n: |\widehat{f}(\xi)| \geq 10\varepsilon \wedge |\widehat{f}(\xi) - \widehat{g}(\xi)| \geq \varepsilon \right\}$$
is the empty set. This leads to the slight improvement if $p=1$. Let us thus assume that $1 < p < 2$.
 We use H\"older's inequality to argue that
$$ \int_{X}  | \widehat{f}(\xi) - \widehat{g}(\xi) |^2 d\xi \leq \left( \int_{X} | \widehat{f}(\xi) - \widehat{g}(\xi) |^{\frac{p}{p-1}} d\xi \right)^{\frac{2p-2}{p}} |X|^{\frac{2-p}{p}}.$$
The set $X$ cannot be too big, note that
\begin{align*}
\|\widehat{f}-\widehat{g}\|_{L^{\frac{p}{p-1}}}^{\frac{p}{p-1}} &= \int_{\mathbb{R}^n}{   | \widehat{f}(\xi) - \widehat{g}(\xi) |^{\frac{p}{p-1}} d\xi} \\
&\geq \int_{X}{   | \widehat{f}(\xi) - \widehat{g}(\xi) |^{\frac{p}{p-1}} d\xi} \geq \varepsilon^{\frac{p}{p-1}} |X|.
\end{align*}
The Hausdorff-Young inequality, valid for any $h \in L^p(\mathbb{R}^n)$ for $1 \leq p \leq 2$,
$$ \|\widehat{h}\|_{L^{\frac{p}{p-1}}(\mathbb{R}^n)} \leq \|h\|_{L^p(\mathbb{R}^n)}$$
then implies, recalling the definition of $\varepsilon$,
$$ |X| \leq \frac{\|\widehat{f}-\widehat{g}\|_{L^{\frac{p}{p-1}}}^{\frac{p}{p-1}} }{\varepsilon^{\frac{p}{p-1}}} \leq \frac{\|f-g\|_{L^p}^{\frac{p}{p-1}}}{\varepsilon^{\frac{p}{p-1}}} =1.$$
We remark that we could get a slightly better constant from using Beckner's inequality \cite{babenko, beckner} but do not pursue sharp constants in this paper.
Therefore
$$ \int_{X}  | \widehat{f}(\xi) - \widehat{g}(\xi) |^2 d\xi \leq \left( \int_{X} | \widehat{f}(\xi) - \widehat{g}(\xi) |^{\frac{p}{p-1}} d\xi\right)^{\frac{2p-2}{p}}.$$
Employing the Hausdorff-Young inequality once more, we obtain
\begin{align*}
\int_{X}  | \widehat{f}(\xi) - \widehat{g}(\xi) |^2 d\xi &\leq \left( \int_{X} | \widehat{f}(\xi) - \widehat{g}(\xi) |^{\frac{p}{p-1}} d\xi\right)^{\frac{2p-2}{p}} \\
&\leq \left( \int_{\mathbb{R}^n} | \widehat{f}(\xi) - \widehat{g}(\xi) |^{\frac{p}{p-1}} d\xi\right)^{2\frac{p-1}{p}} 
\leq \| f- g\|_{L^p(\mathbb{R}^n)}^2.
\end{align*}
\textit{Second Integral.} This estimate is simple, we use the elementary inequality
$$ |a-b|^2 \leq ((|a| - |b|) + 2|b|)^2 \leq 2(|a| - |b|)^2 + 8|b|^2.$$
to argue that
\begin{align*}
\int_{|\widehat{f}(\xi)| \leq 10\varepsilon}  | \widehat{f}(\xi) - \widehat{g}(\xi) |^2 d\xi &\leq 8\int_{|\widehat{f}(\xi)| \leq 10\varepsilon}  | \widehat{f}(\xi)  |^2 d\xi \\
&+ 2\int_{|\widehat{f}(\xi)| \leq 10\varepsilon}  | |\widehat{f}(\xi)| - |\widehat{g}(\xi)| |^2 d\xi.
\end{align*}
\textit{Conclusion.} Collecting all these estimates, we obtain two different bounds depending on the value of $p$. If $1 < p < 2$, we obtain
\begin{align*}
 \|f-g\|_{L^2(\mathbb{R}^n)}^2 &\leq 2\cdot \| |\widehat{f}| - |\widehat{g}| \|_{L^2(\mathbb{R}^n)}^2 + \frac{6}{5} \left\|  \Imn  \overline{\widehat{f}} |\widehat{f}|^{-1}\widehat{g} \right\|_{L^2(\mathbb{R}^n)}^2 \\
 &+ \|f-g\|_{L^p(\mathbb{R}^n)}^2  + 8\int_{|\widehat{f}(\xi)| \leq 10\varepsilon}  | \widehat{f}(\xi)  |^2 d\xi.
\end{align*}
If $p=1$, then the $\|f-g\|_{L^1(\mathbb{R}^n)}^2$ term can be omitted because we can estimate the second term in the first integral by 0. 
In either case, recalling the definition of $\varepsilon$,
$$  \int_{|\widehat{f}(\xi)| \leq 10\varepsilon}  | \widehat{f}(\xi)  |^2 d\xi =  \int_{|\widehat{f}(\xi)| \leq 10\|f-g\|_{L^p(\mathbb{R}^n)}}  | \widehat{f}(\xi)  |^2 d\xi$$
which results in the desired statement.
\end{proof}

\subsection{Proof of Corollary 2}

\begin{lemma} Let $f \in L^1(\mathbb{R}^n) \cap L^2(\mathbb{R}^n)$ have its $k-$th derivative in $L^1(\mathbb{R})$ where $k>(n+2)/2$. Then, as $\varepsilon \rightarrow 0$, we have for some constant $c>0$ depending on $f$,
$$  \int_{|\widehat{f}(\xi)| \leq 10\varepsilon}  | \widehat{f}(\xi)  |^2 d\xi \leq c \cdot \varepsilon^{2-n/k}.$$
\end{lemma}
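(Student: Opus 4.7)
The plan is to exploit the hypothesis on $f$ via the standard fact that smoothness translates into Fourier decay: if every $k$-th order partial derivative of $f$ lies in $L^1(\mathbb{R}^n)$, then one obtains a pointwise estimate of the form $|\widehat{f}(\xi)| \leq C_f |\xi|^{-k}$ for $|\xi| \geq 1$, with $C_f$ depending only on the $L^1$-norms of those derivatives and on $n$. This drops out of the identity $\widehat{\partial^\alpha f}(\xi) = (2\pi i\xi)^\alpha \widehat{f}(\xi)$ combined with the trivial bound $\|\widehat{h}\|_{L^\infty} \leq \|h\|_{L^1}$, applied coordinatewise with $\alpha = k e_j$ and then using $\max_j |\xi_j| \geq |\xi|/\sqrt{n}$.

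With that decay in hand, I would split the sublevel set $A_\varepsilon = \{\xi \in \mathbb{R}^n : |\widehat{f}(\xi)| \leq 10\varepsilon\}$ at a threshold $R = R(\varepsilon)$ to be chosen later, writing
$$\int_{A_\varepsilon} |\widehat{f}(\xi)|^2 d\xi = \int_{A_\varepsilon \cap B_R} |\widehat{f}(\xi)|^2 d\xi + \int_{A_\varepsilon \cap B_R^c} |\widehat{f}(\xi)|^2 d\xi,$$
where $B_R$ denotes the open ball of radius $R$ at the origin. On the inner piece I would use only the defining inequality $|\widehat{f}| \leq 10\varepsilon$, bounding the contribution by $100 \varepsilon^2 |B_R|$, which is a constant multiple of $\varepsilon^2 R^n$. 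On the outer piece the decay estimate yields $\int_{B_R^c} |\widehat{f}(\xi)|^2 d\xi \leq c(n,k) C_f^2 R^{n-2k}$; this tail integral converges precisely because $2k > n$, and the assumption $k > (n+2)/2$ leaves room to spare.

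The two pieces are balanced when $R \sim \varepsilon^{-1/k}$, and with this choice both $\varepsilon^2 R^n$ and $R^{n-2k}$ come out to constant multiples of $\varepsilon^{2 - n/k}$, which is the claimed bound. I do not anticipate any serious obstacle: the whole argument is a routine decay-plus-optimization exercise. The minor points that warrant care are that the bound $C_f|\xi|^{-k}$ is useless for $|\xi| \leq 1$ (harmless, since for small $\varepsilon$ the optimal $R$ exceeds $1$, so the small-$\xi$ region sits entirely inside $B_R$ where we only used the inequality defining $A_\varepsilon$, together with $\widehat{f} \in L^\infty$ coming from $f \in L^1$), and that the constant $c$ collected along the way depends only on $n$, $k$, and $C_f$, exactly as demanded by the statement.
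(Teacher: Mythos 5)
Your argument is correct and is essentially the paper's own proof: both derive the decay $|\widehat{f}(\xi)|\lesssim |\xi|^{-k}$ from the $L^1$ bound on the $k$-th derivatives, split the sublevel set at radius $R=\varepsilon^{-1/k}$, bound the inner piece by $\varepsilon^2 R^n$ using only the constraint $|\widehat{f}|\leq 10\varepsilon$, and bound the tail by $\int_R^\infty r^{-2k}r^{n-1}\,dr \lesssim R^{n-2k}$, with both pieces equal to $\varepsilon^{2-n/k}$. Your added remarks on the origin of the decay estimate and on the harmlessness of the region $|\xi|\leq 1$ are accurate but do not change the route.
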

\begin{proof} We know that, for some implicit constant depending only on $f$,
$$ |\widehat{f}(\xi)| \lesssim \frac{1}{1+ |\xi|^k}.$$
We observe that this is $\lesssim \varepsilon$ as soon as $|\xi| \geq \varepsilon^{-1/k}$. This allows us to estimate
\begin{align*}
 \int_{|\widehat{f}(\xi)| \leq 10\varepsilon}  | \widehat{f}(\xi)  |^2 d\xi &\leq  \int_{|\xi| \leq \varepsilon^{-1/k}}  | \widehat{f}(\xi)  |^2  1_{|\widehat{f}(\xi)| \leq 10\varepsilon} d\xi + \int_{|\xi| \geq \varepsilon^{-1/k}}  | \widehat{f}(\xi) |^2 d\xi \\
 &\leq \varepsilon^2 \varepsilon^{-n/k} +  \int_{|\xi| \geq \varepsilon^{-1/k}}  | \widehat{f}(\xi) |^2 d\xi  \\
 &\lesssim \varepsilon^{2-n/k} + \int_{\varepsilon^{-1/k}}^{\infty} |r|^{-2k} r^{n-1} d\xi  \lesssim \varepsilon^{2-n/k}.
 \end{align*}
\end{proof}

\end{document}